\theoremstyle{plain}
\newtheorem{theorem}{Theorem}
\theoremstyle{definition}
\newtheorem{example}{Example}
\theoremstyle{remark}
\newtheorem{remark}{Remark}
\newcommand{\iverson}[1]{[#1]}
\newcommand{\bk}[1]{\qty(#1)}
\newcommand{\Bk}[1]{\qty[#1]}
\newcommand{\BK}[1]{\qty{#1}}
\newcommand{\mc}[1]{\mathcal #1}
\renewcommand{\norm}[1]{\Vert #1 \Vert}
\newcommand{\sset}{\Phi}
\DeclareMathOperator{\expect}{\mathbb E}
\newcommand{\cgraphic}[2]{\centerline{\includegraphics[width=#1\textwidth]{#2}}}
\begin{document}

\title{Ziv-Zakai-type error bounds for general statistical models}

\author{Mankei Tsang}
\email{mankei@nus.edu.sg}
\homepage{https://blog.nus.edu.sg/mankei/}
\affiliation{Department of Electrical and Computer Engineering,
  National University of Singapore, 4 Engineering Drive 3, Singapore
  117583}

\affiliation{Department of Physics, National University of Singapore,
  2 Science Drive 3, Singapore 117551}

\date{\today}


\begin{abstract}
  I propose Ziv-Zakai-type lower bounds on the Bayesian error for
  estimating a parameter $\beta:\Theta \to \mathbb R$ when the
  parameter space $\Theta$ is general and $\beta(\theta)$ need not be
  a linear function of $\theta$.
\end{abstract}

\maketitle

\section{Introduction}
The Ziv-Zakai family of lower bounds on the Bayesian error of
parameter estimation are useful tools in estimation theory, as they
are often reasonable to compute, do not require the involved functions
to be differentiable, and may be much tighter than the
Cram\'er-Rao-type bounds for certain problems
\cite{ziv-zakai,bell97,bell,qzzb,qbzzb,jeong23}. When the underlying
parameter $\theta \in \Theta$ is vectoral in Euclidean parameter space
$\Theta = \mathbb R^p$, Bell and coworkers have proposed some extended
Ziv-Zakai bounds \cite{bell97}, but they work only when the parameter
of interest $\beta(\theta)$ is a linear function of $\theta$.  Jeong,
Dytso, and Cardone recently generalized Bell's bounds for any prior
probability measure on a Euclidean $\Theta$ and
$\beta(\theta) = \theta$ \cite{jeong23}.  This paper generalizes the
bounds for a general parameter space $\Theta$ and a general function
$\beta:\Theta \to \mathbb R$ as the parameter of interest.

\section{Main result}
Let $Y$ be the observation in a Borel space $(\mc Y,\Sigma_{\mc Y})$
and $P_\theta$ be its probability measure conditioned on a hidden
parameter $\theta \in \Theta$. In the Bayesian setting, the parameter
is also a random variable in a Borel space $(\Theta,\Sigma_\Theta)$;
let $\pi$ be its prior probability measure.
For clarity, I use $\theta$ only to denote a specific value in
$\Theta$ and $X$ to denote the hidden parameter as a random variable in
general. Let $\beta:\Theta \to \mathbb R$ be a parameter of interest
and $\check\beta:\mc Y \to \mathbb R$ be an estimator.  Define a
distortion function $D:\mathbb R_+ \to \mathbb R_+$ with
$\mathbb R_+ \equiv \lbrack 0,\infty \rparen$ and the properties
\begin{align}
D(x) &\ge D(y) \quad \textrm{if } x \ge y,
\\
D(0) &= 0,
\\
D(x) &\textrm{ is differentiable, with } \dot D(x) \equiv \dv{D(x)}{x}.
\end{align}
The Bayesian error can then be defined as
\begin{align}
\mathsf E &\equiv \expect\BK{D[|\check\beta(Y)-\beta(X)|]},
\end{align}
where $\expect$ denotes the expectation with respect to all random
variables. For example, $\mathsf E$ is the mean-square error when
$D(x) = x^2$.

Ziv-Zakai-type bounds are lower bounds on $\mathsf E$ in terms of the
minimum error probability of a related binary-hypothesis-testing
problem. Let the two hypotheses be
\begin{align}
\mc H_0: Y &\sim P_\theta, &
\mc H_1: Y &\sim P_{\phi},
\end{align}
and let the prior probability of $\mc H_0$ be $q$.  I denote the
minimum error probability for this hypothesis-testing problem as
$\Pi(\theta,\phi,q)$. I can now present the main result of this paper.

\begin{theorem}
\label{thm}
Let $t \in \mathbb R_+$. For each $t$, let
$\sset_t,\sset_t' \in \Sigma_\Theta$ be two subsets of the parameter
space,
$\Sigma_t \equiv \{S \cap \sset_t: S \in \Sigma_\Theta\}\equiv
\Sigma_\Theta \cap \sset_t$ and
$\Sigma_t' \equiv \Sigma_\Theta \cap \sset_t'$ be the restrictions
\cite[Sec.~1.4]{tao_measure} of $\Sigma_\Theta$ to $\sset_t$ and
$\sset_t'$, respectively, and $T_t:\sset_t \to \sset_t'$ be a
$\Sigma_t/\Sigma_t'$-measurable map such that there exists a measure
$\pi_t$ on $(\sset_t,\Sigma_t)$ satisfying
\begin{align}
\pi &= \pi_t T_t^{-1}
\quad
\textrm{on }(\sset_t',\Sigma_t'),
\label{pit}
\end{align}
and
\begin{align}
\beta(T_t(\theta)) \ge \beta(\theta) + t
\quad
\textrm{almost everywhere on $\sset_t$ with respect to $\pi_t$.}
\label{rule}
\end{align}
Moreover, assume that both $\pi$ and $\pi_t$ are dominated by a
reference measure $\mu$ on $(\sset_t,\Sigma_t)$ and their densities
are denoted as
\begin{align}
f &\equiv \dv{\pi}{\mu},
\\
f_t &\equiv \dv{\pi_t}{\mu}.
\label{ft}
\end{align}
Then
\begin{align}
\mathsf E &\ge  \mathsf Z_1 \equiv 
\frac{1}{2} \int_0^\infty dt \dot D\bk{\frac{t}{2}}
\mc V g_1(t),
\label{bound1}
\\
g_1(t) &\equiv \int_{\sset_t} d\mu(\theta) \Bk{f(\theta)+f_t(\theta)} 
\Pi\qty(\theta,T_t(\theta), \frac{f(\theta)}{f(\theta)+f_t(\theta)}),
\label{g1}
\end{align}
where 
\begin{align}
\mathcal V g(t) &\equiv \sup_{t':t'\ge t} g(t')
\end{align}
is the valley-filling operator \cite{bell}. A more convenient bound is
\begin{align}
\mathsf E &\ge \mathsf Z_2 \equiv
\frac{1}{2} \int_0^\infty dt \dot D\bk{\frac{t}{2}}
\mathcal V g_2(t),
\label{bound2}
\\
g_2(t) &\equiv 
2\int_{\sset_t} d\mu(\theta) \min\BK{f(\theta),f_t(\theta)} \Pi\qty(\theta,T_t(\theta),0.5).
\label{g2}
\end{align}
\end{theorem}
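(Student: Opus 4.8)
The plan is to start from the standard ``layer-cake'' representation of the distortion and reduce everything to a single pointwise probability bound. Since $D(0)=0$ and $D$ is differentiable, $D(a)=\int_0^a\dot D(x)\,dx=\int_0^\infty \dot D(x)\,\iverson{a\ge x}\,dx$, and $\dot D\ge 0$ because $D$ is nondecreasing. Taking the expectation and applying Tonelli's theorem (all integrands are nonnegative) gives $\mathsf E=\int_0^\infty dx\,\dot D(x)\Pr\{|\check\beta(Y)-\beta(X)|\ge x\}$, and the substitution $x=t/2$ puts this in the form
\begin{align}
\mathsf E=\frac12\int_0^\infty dt\,\dot D\bk{\tfrac t2}\,R(t),\qquad R(t)\equiv \Pr\{|\check\beta(Y)-\beta(X)|\ge t/2\}.
\end{align}
The theorem then follows from the pointwise inequality $R(t)\ge g_1(t)$ together with the monotonicity of $R$ that powers the valley-filling step.

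For fixed $t$ I would lower-bound $R(t)$ by reducing the binary test to the estimator. Given candidate values $\theta$ and $\phi=T_t(\theta)$, consider the suboptimal threshold test that declares $\mc H_1$ iff $\check\beta(Y)\ge \tfrac12[\beta(\theta)+\beta(\phi)]$. Because \eqref{rule} guarantees $\beta(\phi)\ge\beta(\theta)+t$, the midpoint lies at least $t/2$ above $\beta(\theta)$ and at least $t/2$ below $\beta(\phi)$, so the type-I error is at most the ``overshoot'' probability $A_t(\theta)\equiv\Pr\{\check\beta(Y)-\beta(\theta)\ge t/2\mid X=\theta\}$ and the type-II error is at most the ``undershoot'' probability $B_t(\phi)\equiv\Pr\{\check\beta(Y)-\beta(\phi)\le -t/2\mid X=\phi\}$. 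Since $\Pi$ is the minimum error probability, choosing the prior $q=f(\theta)/[f(\theta)+f_t(\theta)]$ and clearing the denominator gives, $\mu$-almost everywhere on $\sset_t$ (trivially on $\{f_t=0\}$, and via \eqref{rule} elsewhere),
\begin{align}
[f(\theta)+f_t(\theta)]\,\Pi\!\qty(\theta,T_t(\theta),\tfrac{f(\theta)}{f(\theta)+f_t(\theta)})\le f(\theta)A_t(\theta)+f_t(\theta)B_t(T_t(\theta)).
\end{align}
Integrating against $\mu$ over $\sset_t$ recognizes $\int_{\sset_t}f A_t\,d\mu=\Pr\{\check\beta(Y)-\beta(X)\ge t/2,\,X\in\sset_t\}$, and the second term is converted by the pushforward identity \eqref{pit}, namely $\int_{\sset_t}(h\circ T_t)\,f_t\,d\mu=\int_{\sset_t}(h\circ T_t)\,d\pi_t=\int_{\sset_t'}h\,d\pi$ with $h=B_t$, into $\Pr\{\check\beta(Y)-\beta(X)\le -t/2,\,X\in\sset_t'\}$.

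The observation that makes the constant come out right is that these two events are disjoint: one forces a strictly positive residual $\check\beta(Y)-\beta(X)\ge t/2>0$, the other a strictly negative residual $\check\beta(Y)-\beta(X)\le -t/2<0$. Their probabilities therefore add to the probability of their union, which is contained in $\{|\check\beta(Y)-\beta(X)|\ge t/2\}$, yielding exactly $g_1(t)\le R(t)$ with no factor-of-two loss. Since $R$ is nonincreasing, $R(t)\ge R(t')\ge g_1(t')$ for every $t'\ge t$, hence $R(t)\ge\sup_{t'\ge t}g_1(t')=\mc V g_1(t)$, and substituting into the representation proves \eqref{bound1}. For \eqref{bound2} I would use the pointwise comparison $g_1\ge g_2$: writing the Bayes error as $\Pi(\theta,\phi,q)=\int\min\{q\,dP_\theta,(1-q)\,dP_\phi\}$, the integrand of $g_1$ is $\int\min\{f\,dP_\theta,f_t\,dP_\phi\}$ while that of $g_2$ is $\int\min\{f,f_t\}\min\{dP_\theta,dP_\phi\}$, and the elementary inequality $\min\{fa,f_tb\}\ge\min\{f,f_t\}\min\{a,b\}$ (checked by cases on the orderings of $f,f_t$ and of $a,b$) gives $\mc V g_1\ge\mc V g_2$ and hence $\mathsf Z_1\ge\mathsf Z_2$.

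I expect the main obstacle to be the measure-theoretic bookkeeping of the second step rather than any hard inequality: justifying the change of variables \eqref{pit} in precisely the form needed, securing joint measurability in $(\theta,Y)$ so that Tonelli applies to the $\theta$-integral of the conditional probabilities, and verifying that the almost-everywhere qualifier in \eqref{rule} together with the degenerate locus $\{f_t=0\}$ does not spoil the pointwise inequality before integration. The disjointness of the overshoot and undershoot events is the one genuinely load-bearing idea, and everything downstream is then routine.
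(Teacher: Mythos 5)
Your proposal is correct, and for the bound \eqref{bound1} it is essentially the paper's argument run in the opposite direction. The paper starts from $\Pr(|\check\beta-\beta|\ge t/2)$, splits it into the overshoot and undershoot probabilities in Eq.~(\ref{Pr}), restricts them to $\sset_t$ and $\sset_t'$, applies the pushforward identity \eqref{pit} and the rule \eqref{rule}, and only then recognizes the resulting bracket as the error probability of a suboptimal decision rule, which dominates $\Pi$. You instead start from $\Pi$, dominate it by the suboptimal threshold test built from $\check\beta$ (you threshold at the midpoint $\tfrac12[\beta(\theta)+\beta(T_t(\theta))]$ where the paper's implicit test thresholds at $\beta(\theta)+t/2$; by \eqref{rule} both lie in the window that makes the overshoot/undershoot domination work, so the difference is immaterial), and then integrate back up using the same change-of-variables step, with disjointness of the two residual events playing the role of the paper's exact decomposition (\ref{Pr}). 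The chain of inequalities is the same; only the order of presentation differs. Where you genuinely depart is the bound \eqref{bound2}: the paper re-derives it from the estimator-based expression (\ref{Pr2}) via $f+f_t\ge 2\min\{f,f_t\}$ and a second appeal to the definition of $\Pi$, whereas you prove the pointwise comparison $g_1(t)\ge g_2(t)$ directly from the representation $\Pi(\theta,\phi,q)=\int\min\{q\,dP_\theta,(1-q)\,dP_\phi\}$ together with $\min\{fa,f_tb\}\ge\min\{f,f_t\}\min\{a,b\}$. Your route has the advantage of establishing the hierarchy $\mathsf Z_1\ge\mathsf Z_2$ explicitly (the paper only shows that both are lower bounds on $\mathsf E$), at the small cost of invoking the explicit Bayes-error formula, which the paper never needs. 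Your handling of the measure-theoretic caveats is also sound: on the $\pi_t$-null set where \eqref{rule} fails, $f_t=0$ holds $\mu$-almost everywhere, and there the pointwise inequality is trivial because $\Pi(\theta,\phi,1)=0$.
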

\begin{proof}
With the given properties of $D$, the error can be expressed as
\begin{align}
\mathsf E &= \frac{1}{2} \int_0^\infty dt \dot D\bk{\frac{t}{2}}
\Pr\bk{|\check\beta-\beta| \ge \frac{t}{2}},
\end{align}
where $\Pr$ denotes the probability. Write
\begin{align}
\Pr\bk{|\check\beta-\beta| \ge \frac{t}{2}}
&= \Pr\bk{\check\beta \ge \beta(X) + \frac{t}{2}}
+\Pr\bk{\check\beta < \beta(X) - \frac{t}{2}}.
\label{Pr}
\end{align}
The first probability on the right-hand side of Eq.~(\ref{Pr}) can be bounded as
\begin{align}
\Pr\bk{\check\beta \ge \beta(X) + \frac{t}{2}}
&\ge \int_{\sset_t} d\mu(\theta) f(\theta)
\Pr\bk{\check\beta \ge \beta(\theta) + \frac{t}{2}|X = \theta},
\end{align}
while the second probability can be bounded as 
\begin{align}
\Pr\bk{\check\beta < \beta(X) - \frac{t}{2}}
&\ge \int_{\theta\in \sset_t'} d\pi(\theta) 
\Pr\bk{\check\beta < \beta(\theta) - \frac{t}{2}|X = \theta}
\\
&= \int_{\phi \in \sset_t} d\pi_t(\phi) 
\Pr\bk{\check\beta < \beta(T_t(\phi)) - \frac{t}{2}|X = T_t(\phi)}
\label{measure_change}
\\
&\ge 
\int_{\phi \in \sset_t} d\mu(\phi)
f_t(\phi) \Pr\bk{\check\beta < \beta(\phi) + \frac{t}{2}|X = T_t(\phi)},
\label{magic}
\end{align}
where Eq.~(\ref{measure_change}) comes from Eq.~(\ref{pit}) and the
change-of-variable formula \cite[Theorem~16.13]{billingsley} and
Eq.~(\ref{magic}) comes from Eqs.~(\ref{rule}) and
(\ref{ft}). Equation~(\ref{Pr}) can then be bounded as
\begin{align}
\Pr\bk{|\check\beta-\beta| \ge \frac{t}{2}} &\ge 
g(t) \equiv
\int_{\sset_t} d\mu(\theta) 
\left[f(\theta)\Pr\bk{\check\beta \ge \beta(\theta) + \frac{t}{2}|X = \theta}
\right.
\nonumber\\ &\quad
\left.+ f_t(\theta) \Pr\bk{\check\beta < \beta(\theta) + \frac{t}{2}|X = T_t(\theta)}
\right]
\label{Pr2}
\\
&= 
\int_{\sset_t} d\mu(\theta) [f(\theta)+f_t(\theta)]
\left[\frac{f(\theta)}{f(\theta)+f_t(\theta)}
\Pr\bk{\check\beta \ge \beta(\theta) + \frac{t}{2}|X = \theta}\right.
\nonumber\\ &\quad
\left.+ \frac{f_t(\theta) }{f(\theta)+f_t(\theta)}
\Pr\bk{\check\beta < \beta(\theta) + \frac{t}{2}|X = T_t(\theta)}\right].
\end{align}
The expression in the square brackets can be regarded as the error
probability of a decision rule for a related hypothesis-testing
problem and lower-bounded by $\Pi(\theta,T_t(\theta),f/(f+f_t))$. A
lower bound $g_1(t)$ given by Eq.~(\ref{g1}) on
$\Pr(|\check\beta-\beta|\ge t/2)$ then results.  Since
$\Pr(|\check\beta-\beta|\ge t/2)$ is a nonincreasing function of $t$,
\begin{align}
\Pr(|\check\beta-\beta|\ge\frac{t}{2}) &\ge 
\Pr(|\check\beta-\beta|\ge\frac{t'}{2}) \ge g_1(t'),
\quad
\forall t' \ge t,
\end{align}
the valley-filling operator $\mc V$ can be applied to $g_1$ to obtain
a tighter bound, and Eq.~(\ref{bound1}) results.

Equations~(\ref{bound2}) and (\ref{g2}) can be obtained
similarly by starting with Eq.~(\ref{Pr2}), using the bound
\begin{align}
g(t) &\ge 
2 \int_{\sset_t} d\mu(\theta) \min\BK{f(\theta),f_t(\theta)}
\left[\frac{1}{2} \Pr\bk{\check\beta \ge \beta(\theta) + \frac{t}{2}|X = \theta}
\right.
\nonumber\\&\quad
\left.
+\frac{1}{2} \Pr\bk{\check\beta < \beta(\theta) + \frac{t}{2}|X = T_t(\theta)}\right],
\end{align}
and lower-bounding the expression in the square brackets by
$\Pi(\theta,T_t(\theta),0.5)$.
\end{proof}

\begin{remark}
  At each $t$, $T_t$ and its domain $\sset_t$---where $T_t$ satisfies
  Eq.~(\ref{rule})---may be picked to maximize $g_1(t)$ or $g_2(t)$
  and produce the tightest bounds. If a $T_t$ can be found such that
  Eq.~(\ref{rule}) is an equality, then Eq.~(\ref{magic}) is an
  equality as well, and the bounds may be improved. For a chosen
  $T_t$, its domain $\sset_t$ should also be as big as
  Theorem~\ref{thm} allows.
\end{remark}

\begin{remark}
  Note that $\pi_t$ may not be a probability measure, as the
  pushforward measure $\pi_t T_t^{-1}$ is required to match the given
  prior $\pi$ only on a subset $\sset_t'$ of the parameter space.
\end{remark}

\begin{remark}
  It is straightforward to generalize Theorem~\ref{thm} for quantum
  problems via quantum lower bounds on $\Pi$ \cite{qzzb,qbzzb}.
\end{remark}

\section{Examples}
\begin{example}
  If $\Theta = \mathbb R^p$, $\beta = u^\top\theta$, and
  $T_t(\theta) = \theta + v t$ with $u^\top v = 1$, where $\top$
  denotes the matrix transpose, $\theta$, $u$, and $v$ are all real
  column vectors of dimension $p$, and $u$ and $v$ do not depend on
  $\theta$, then one can pick $\sset_t = \sset_t' = \Theta$,
  $d\mu(\theta) = d^p\theta$, and $f_t(\theta) = f(\theta+vt)$, such
  that Theorem~\ref{thm} becomes the extended Ziv-Zakai bounds in
  Ref.~\cite{bell97}.
\end{example}
\begin{example}
\label{exa_norm}
Let $\Theta = \mathbb R^p$ and the parameter of interest be the
Euclidean norm
\begin{align}
\beta(\theta) &= \norm{\theta} \equiv \sqrt{\theta^\top\theta}.
\end{align}
Suppose that the observation conditioned on each $\theta \in \Theta$
follows the linear Gaussian model $Y \sim N(\theta,\sigma^2 I)$, where
$I$ is the identity matrix.  Previous Ziv-Zakai bounds are
inapplicable because $\beta(\theta)$ here is a nonlinear function of
$\theta$.

It is well known that
\begin{align}
\Pi(\theta,T_t(\theta),0.5)
&= Q\qty(\frac{\norm{\theta-T_t(\theta)}}{2\sigma}),
\end{align}
where $Q(x) \equiv \Pr(Z \ge x)$ for a standard normal $Z$. It is also
straightforward to show that
\begin{align}
T_t(\theta) &= \theta  + \frac{\theta}{\norm{\theta}}t
\label{T_norm}
\end{align}
minimizes $\norm{\theta-T_t(\theta)}$ subject to the constraint
$\beta(T_t(\theta)) = \norm{T_t(\theta)} \ge \beta(\theta) + t =
\norm{\theta}+t$, and the minimum $\norm{\theta-T_t(\theta)}$ is equal
to $t$. Equation~(\ref{bound2}) for the mean-square error becomes
\begin{align}
\mathsf Z_2 &= \int_0^\infty dt t 
\mathcal V\Bk{ A(t) Q\bk{\frac{t}{2\sigma}}},
\\
A(t) &\equiv \int_{\sset_t} d\mu \min\BK{f,f_t}.
\end{align}
In some asymptotic limit of high signal-to-noise ratio (SNR),
$Q(t/2\sigma)$ is expected to be concentrated near $t = 0$ and much
sharper than $A(t)$, so $A(t)$ can be approximated as
$A(t) \approx A(0) = 1$, leading to
\begin{align}
\mathsf Z_2 &\approx \int_0^\infty dt t Q\bk{\frac{t}{2\sigma}}= \sigma^2.
\end{align}
For an example of $A(t)$, consider a uniform prior density with
respect to $d\mu(\theta) = d^p\theta$ for $\norm{\theta} \le R$,
expressed as
\begin{align}
f(\theta) &= \frac{1}{V_p R^p} \iverson{\norm{\theta} \le R},
\end{align}
where $V_p$ is the volume of the unit ball in $p$ dimensions and
$\iverson{\cdot}$ denotes the Iverson bracket. To compute $f_t$, one
can pick $\sset_t = \Theta - \{0\}$ and
$\sset_t' = \{\theta \in \Theta: \norm{\theta} > t\}$ such that $T_t$
is bijective and
\begin{align}
f_t(\theta) &= f(T_t(\theta)) |\det D(\theta)|,
\end{align}
where $D_{jk}(\theta) \equiv \pdv*{[T_t(\theta)]_j}{\theta_k}$ is the
Jacobian matrix \cite[Theorem~17.2]{billingsley}. With some effort, it
can be shown that
\begin{align}
f_t(\theta) &= \frac{1}{V_p R^p} \iverson{\norm{\theta} \le R-t}
\bk{1 + \frac{t}{\norm{\theta}}}^{p-1},
\\
\min\BK{f,f_t} &= \frac{1}{V_p R^p} \iverson{\norm{\theta} \le R-t},
\\
A(t) &= \bk{1 - \frac{t}{R}}^p \iverson{t \le R}.
\end{align}
The width of $A(t)$ is roughly $R/p$, so the high-SNR regime can be
reached if $\sigma \ll R/p$.

$\mathsf Z_2$ can be compared with a Bayesian Cram\'er-Rao bound in a
high-SNR regime \cite{gill95,bcrb_pra} given by
\begin{align}
\mathsf E &\ge \mathsf C \approx 
\expect\bk{u^\top J^{-1} u} = \sigma^2,
&
u_j &\equiv \pdv{\beta}{\theta_j} = \frac{\theta_j}{\beta},
&
J^{-1} = \sigma^2 I,
\end{align}
where $J$ is the Fisher information matrix.
\end{example}
While the two bounds $\mathsf Z_2$ and $\mathsf C$ in
Example~\ref{exa_norm} look identical, note that Cram\'er-Rao-type
bounds in general require $f(\theta)$, $P_\theta$, and $\beta(\theta)$
to be differentiable in some way, whereas Theorem~\ref{thm} here does
not.  More dramatic differences between the two types of bounds may be
obtained when the observation model is nonlinear or non-Gaussian.

Another problem with existing Bayesian Cram\'er-Rao bounds
\cite{bell,gill95,jupp10,bcrb_pra} is that they assume the parameter
space $\Theta$ to be a finite-dimensional differentiable manifold such
that differential geometry can be used, and it is unclear how they may
be generalized for an infinite-dimensional $\Theta$, where
differential geometry becomes a daunting subject. Theorem~\ref{thm},
on the other hand, can deal with an infinite-dimensional $\Theta$, at
least in principle.

\section{Geometric picture}
It may be helpful to visualize $T_t$ in terms of a game in the
parameter space, as shown in Fig.~\ref{game}.  $\beta(\theta)$ sets
the difficulty levels of the game as a function of the player position
$\theta \in \Theta$.  $\{\theta:\beta(\theta) = C\}$ for a constant
$C$ then specifies a surface of constant difficulty, which I call a
stage. Each player starts at a position $\phi \in \sset_t$ and moves to
a final position $T_t(\phi)$ while satisfying certain rules. $t$ is
the transport time; assume that it is given.  Equation~(\ref{rule}) is
a rule that each player must reach a stage with difficulty level
$\beta(T_t(\phi))$ at least $\beta(\phi)+t$, which is proportionate to
the time taken.  Since the stage with the minimum target level
$\{\theta:\beta(\theta) = \beta(\phi)+t\}$ may not exist (i.e., the
set may be empty), the inequality in Eq.~(\ref{rule}) allows the
player to reach a stage that exists at a higher level.

\begin{figure}[htbp!]
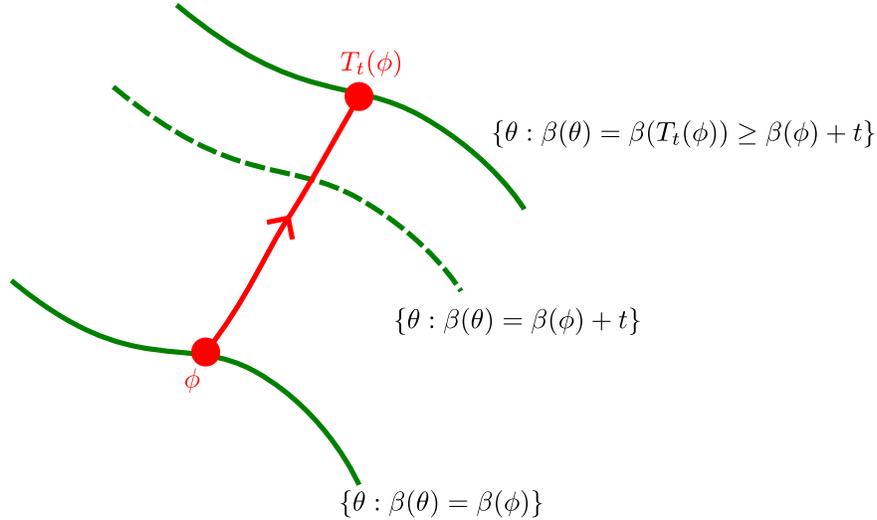

\cgraphic{0.7}{game}
\caption{\label{game}In the game picture, each player moves from the
  starting position $\phi$ to the final position $T_t(\phi)$ in the
  parameter space. $\beta(\theta)$ can be regarded as the difficulty
  level of a game as a function of player position
  $\theta \in \Theta$. $\beta(\theta) = C$ is then a surface of
  constant difficulty, or a stage. Each player is required to reach a
  stage with difficulty level at least $\beta(\phi) + t$.}
\end{figure}

To understand the role of $T_t$'s domain $\sset_t$, suppose that each
player is uniquely associated with one starting position $\phi$, which
is used as their identification. If a player $\phi$ is unable to
satisfy the rule at time $t$, then they should be removed from the
game by excluding $\phi$ from $\sset_t$. For example, if there is a
supremum difficulty level
$\beta_{\mathrm{sup}} \equiv \sup_{\theta\in\Theta}\beta(\theta) <
\infty$, then a player $\phi$ with
$\beta(\phi) + t > \beta_{\mathrm{sup}}$ can never satisfy
Eq.~(\ref{rule}) because there is no stage at the target level
$\beta(T_t(\phi)) \ge \beta(\phi) + t > \beta_{\mathrm{sup}}$ by time
$t$. The player $\phi$ should hence be excluded from $\sset_t$.

The measure $\pi_t$ assumed by Eq.~(\ref{pit}) can be regarded as the
initial distribution of the players on $\sset_t$.  After the movements
specified by $T_t$, $\pi$ restricted to $\sset_t'$ is their final
distribution.  I call $(\sset_t,T_t,\pi_t)$ a flock. Since $\pi$ is
given, the existence of $\pi_t$ given by Eq.~(\ref{pit}) and its
density $f_t$ given by Eq.~(\ref{ft}) is another rule for the flock.

When there are many qualified flocks, one that produces a high
$g_1(t)$ or $g_2(t)$ should be chosen. $g_j(t)$ can be regarded as a
reward in the game and a definition of flock efficiency.  For example,
Ref.~\cite{bell97} assumes that the stages
$\beta(\theta) = u^\top \theta = C$ are parallel planes in the
Euclidean space, and the flock are a family of parallel displacements
$T_t(\theta) = \theta + v t$ with velocity $v$. If $u^\top v = 1$, the
flock for all starting positions in $\sset_t = \Theta$ are just fast
enough to reach their target stages. Subject to this constraint, $v$
is then chosen to produce a high $g_j(t)$. For another example,
consider the choice of $T_t$ given by Eq.~(\ref{T_norm}) in
Example~\ref{exa_norm}. It minimizes the distance
$\norm{\theta-T_t(\theta)}$, since the $\Pi(\theta,T_t(\theta),0.5)$
there is a decreasing function of the distance and the prior is
ignored. The resulting flock move in radial directions, normal to the
constant-$\beta$ surfaces, as one may heuristically expect from an
efficient flock for the game.

It may be interesting to explore further this geometric picture in
relation to the geometric picture of the Bayesian Cram\'er-Rao bounds
\cite{bcrb_pra}.

This research is supported by the National Research Foundation,
Singapore, under its Quantum Engineering Programme (QEP-P7).

\bibliography{research3}

\end{document}